\theoremstyle{plain}
\newtheorem{Thm}{Theorem}[section]
\newtheorem{Prop}[Thm]{Proposition}
\theoremstyle{definition}
\newtheorem{Def*}{Definition}
\begin{document}

\title{Mean field matching and TSP in pseudo-dimension 1}

\author{ Giorgio Parisi \\
\small Dipartimento di Fisica\\[-0.8ex] \small
Universit\`a di Roma, La Sapienza, Rome, Italy.
\\[-0.8ex] \small
\texttt{giorgio.parisi@Roma1.infn.it
}\\
\\
Johan W\"astlund \\
\small  Department of Mathematical Sciences\\[-0.8ex] \small
Chalmers University of Technology, Gothenburg, Sweden\\[-0.8ex] \small
\texttt{wastlund@chalmers.se}}

\date{\small \today\\
}

\maketitle

\begin{abstract}
Recent work on optimization problems in random link models has verified several conjectures originating in statistical physics and the replica and cavity methods. In particular the numerical value 2.0415 for the limit length of a traveling salesman tour in a complete graph with uniform $[0,1]$ edge lengths has been established.

In this paper we show that the crucial integral equation obtained with the cavity method has a unique solution, and that the limit ground state energy obtained from this solution agrees with the rigorously derived value. Moreover, the method by which we establish uniqueness of the solution turns out to yield a new completely rigorous derivation of the limit.
\end{abstract}

\section{Introduction} \label{S:intro}
In \cite{W09}, the minimum matching and traveling salesman problems were studied in the pseudo-dimension $d$ mean field (or random link) model for $d\geq 1$. It was shown that certain predictions of \cite{KM89, MP85, MP86a, MP86b, MP87} based on the replica method are indeed correct. Here we show that the case $d=1$ allows stronger and more detailed conclusions, and we clarify the relation to the earlier results in \cite{W10tsp}.

The simplest random model corresponding to $d=1$ is the complete graph $K_n$ on $n$ vertices, with independent lengths from uniform distribution on the interval $[0,1]$ associated to the edges. We consider only this model, although the results, ultimately based on the local tree structure of the relatively short edges, remain valid in a number of similar models.

The minimum matching problem asks for a set of $n/2$ edges of minimum total length under the constraint that each vertex must be incident to exactly one edge. This requires $n$ to be even, but for odd $n$ we may allow one vertex to be left out of the pairing. It is known that the asymptotic behavior of the optimum solution remains the same even if we only require $n/2 - O(1)$ disjoint edges, in other words if we allow any fixed number of vertices to remain unmatched.

The traveling salesman problem (TSP) asks for a tour of minimum total length visiting every vertex exactly once. Since the triangle inequality need not hold, there will in general be shorter walks visiting each vertex and returning to the starting point if the same vertex can be visited several times. If such walks are permitted, one may or may not allow the same edge to be traversed more than once. Clearly there are several possible interpretations of the TSP, but we study the strictest one in which we ask for a cycle of $n$ edges.

The two problems were studied with the replica and cavity methods in \cite{KM89, MP85, MP86a, MP86b, MP87}, and among the results were predictions about the large $n$ limit of the total length of the solution, or in physical language the ground state energy in the thermodynamical limit. The idea is that as $n\to\infty$, the length $L_n$ of the optimal solution, which is a random quantity for each $n$, converges to a non-random limit $L^\star$. One may conjecture on fairly general grounds that $E(L_n) \to L^\star$ and that $L_n$ is ``self-averaging'' so that $L_n \to L^\star$ in probability. Remarkably, methods of physics allow for precise calculation of the limits $L^\star_M$ and $L^\star_{TSP}$ for matching and TSP respectively.

We can also define the $k$-factor problem where we ask for a set of $kn/2$ edges of minimum total length under the constraint that each vertex must be incident to exactly $k$ edges. Clearly $k=1$ is the matching problem, and the case $k=2$ is a relaxation of the TSP allowing multiple cycles. A nontrivial result, implicit in the early physics literature and rigorously proved by A.~Frieze \cite{F04}, tells us that in the large $n$ limit the length of the 2-factor and of the TSP are the same. In principle the results presented here can be generalized to the $k$-factor problem for generic $k$, but the computations become less explicit.

\subsection{The replica and cavity results}
We briefly recall some of the results of \cite{KM89, MP85}. Both problems lead to certain integral equations for the so-called \emph{order parameter function}. For the matching problem the equation is \begin{equation} \label{MMeq} G(x) = \int_{-x}^\infty e^{-G(y)}\, dy,\end{equation} and the ground state energy is given by \begin{equation} \label{MMground} L^\star_M = \frac12\int_{-\infty}^{+\infty} G(x)e^{-G(x)}\, dx.\end{equation}
For the TSP the equations take a similar form. The order parameter function $G$ has to satisfy \begin{equation} \label{TSPeq} G(x) = \int_{-x}^\infty (1+G(y))e^{-G(y)}\, dy,\end{equation} and the ground state energy is \begin{equation} \label{TSPground} L^\star_{TSP} = \frac12\int_{-\infty}^{+\infty} G(x)(1+G(x))e^{-G(x)}\, dx.\end{equation}
Here we consider only the case $r=0$ (in the notation of \cite{KM89, MP85}), corresponding to $d=1$ in \cite{W09}.

The equation \eqref{MMeq} corresponding to minimum matching has the explicit solution $G(x) = \log(1+e^x)$, and the ground state energy is $L^\star_M = \pi^2/12$. There does not seem to be an explicit solution to the analogous equation \eqref{TSPeq} for the TSP, but in \cite{KM89} a numerical solution led to $L^\star_{TSP} \approx 2.0415$, even though there was no proof that \eqref{TSPeq} has a solution or that such a solution must be unique.

\subsection{Rigorous results}
The $\pi^2/12$-limit for matching was established rigorously by David Aldous in 2001 \cite{A92, A01}. The method was related to the physics approach, and used the solution to \eqref{MMeq}. A similar approach to the TSP was indicated in \cite{A01}, but the main obstacle at the time seems to have been that \eqref{TSPeq} was not known to have a solution.

In \cite{W10tsp} the limit $L^\star_{TSP}$ of the TSP was determined with a different method. The result (conjectured in the technical report \cite{W05exact}) was
\begin{equation} \label{TSPrig} L^\star_{TSP} = \frac12\int_0^\infty y\,dx,\end{equation} where $y$ as a function of $x$ is defined by $y > 0$ and \begin{equation} \label{tspeq} \left(1+\frac x2\right)e^{-x} +  \left(1+\frac y2\right)e^{-y} = 1.\end{equation}
This led to the question whether the numbers given by \eqref{TSPground} and \eqref{TSPrig} are equal, and to the hope that a solution to \eqref{TSPeq} could somehow be reverse-engineered from \eqref{tspeq}.

\section{Agreement on the TSP} \label{S:cavity}
The first new result of this paper is a proof that equation \eqref{TSPeq} has a unique solution, and that the characterization of $L^\star_{TSP}$ by \eqref{TSPground} agrees with \eqref{TSPrig}.

\begin{Prop}
The integral equation \eqref{TSPeq} has a unique solution.
\end{Prop}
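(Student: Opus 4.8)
The plan is to turn the functional equation into an \emph{autonomous} first-order ODE with a prescribed value at $0$, where uniqueness is classical. Write $f(t) = (1+t)e^{-t}$ and $F(t) = \int_0^t f(s)\,ds = 2 - (t+2)e^{-t}$, so that $F$ is a smooth strictly increasing bijection of $[0,\infty)$ onto $[0,2)$; one notes in passing that \eqref{tspeq} is exactly the relation $F(x)+F(y) = 2$. I would first record the elementary qualitative properties of an arbitrary solution $G$ of \eqref{TSPeq}. Since by definition the defining integral converges, $G$ is continuous, hence $C^\infty$ by bootstrapping, and differentiating the variable lower limit gives $G'(x) = f(G(-x))$. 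Using that $f > 0$ on $(-1,\infty)$ one checks that $G$ is nonnegative and strictly increasing; that $G(x)\to 0$ as $x\to-\infty$, because the lower limit runs to $+\infty$ and the tails $\int_a^\infty f(G)\,dy$ tend to $0$; and that $G(x)\to+\infty$ as $x\to+\infty$, since if $G$ were bounded the integrand would be bounded below by a positive constant and the integral would diverge.

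The crux is the observation that $F(G(x)) + F(G(-x))$ is conserved: using $G'(x) = f(G(-x))$ (so $\tfrac{d}{dx}G(-x) = -f(G(x))$),
\[
\frac{d}{dx}\bigl[F(G(x)) + F(G(-x))\bigr] = f(G(x))\,f(G(-x)) - f(G(-x))\,f(G(x)) = 0,
\]
and letting $x\to+\infty$ identifies the constant as $F(+\infty) + F(0) = 2$. Hence $F(G(x)) + F(G(-x)) = 2$ for all $x$ --- which is \eqref{tspeq} with $(x,y) = (G(x),G(-x))$ --- and at $x=0$ this forces $G(0) = \alpha$, where $\alpha := F^{-1}(1)$ is the unique root of $(1+\alpha/2)e^{-\alpha} = \tfrac12$. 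Writing $g := F^{-1}\circ(2-F)$, a smooth decreasing involution of $(0,\infty)$, the conserved identity says $G(-x) = g(G(x))$, and feeding this back into $G'(x) = f(G(-x))$ yields the autonomous equation $G' = \Phi(G)$ with $\Phi := f\circ g$ smooth and strictly positive on $(0,\infty)$. Any two solutions of \eqref{TSPeq} therefore solve the same initial value problem $w' = \Phi(w)$, $w(0) = \alpha$, and coincide --- by Picard--Lindel\"of, or concretely because $x = \int_\alpha^{G(x)} ds/\Phi(s)$ determines $G(x)$ uniquely. This gives uniqueness.

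For existence I would reverse the construction: define $G$ as the inverse of $x \mapsto \int_\alpha^x ds/\Phi(s)$, having first checked the asymptotics $\Phi(s) \sim s$ as $s\to 0^+$ and $\Phi(s)\to 1$ as $s\to+\infty$, which make this integral diverge at both ends, so that $G$ is a smooth increasing bijection $\mathbb{R}\to(0,\infty)$ with $G(0)=\alpha$ and $G' = \Phi(G)$. The reflection symmetry $G(-x) = g(G(x))$ then follows because both $t\mapsto G(-G^{-1}(t))$ and $g$ solve $w'(t) = -\Phi(w(t))/\Phi(t)$ with $w(\alpha)=\alpha$ (using $g\circ g = \mathrm{id}$), hence agree. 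Finally, setting $R(x) := \int_{-x}^\infty f(G(y))\,dy$ (convergent since $G(y)\sim y$ at $+\infty$), one gets $R'(x) = f(G(-x)) = \Phi(G(x)) = G'(x)$ and $R(x)\to 0 = \lim_{x\to-\infty} G(x)$, so $R \equiv G$ and $G$ solves \eqref{TSPeq}.

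I expect the genuine work to lie not in the ODE reduction, which is short, but around it: justifying the smoothness and --- more delicately --- the exact limits of a hypothetical solution at $\pm\infty$, since these are what pin the conserved quantity to the value $2$ and hence $G(0)$ to $\alpha$, together with the asymptotics of $\Phi$ needed for the existence half. The one genuinely clever point is spotting the conserved quantity $F(G(x)) + F(G(-x))$; once it is in hand, both uniqueness and the link to \eqref{tspeq} --- and thus, presumably, the subsequent identification of the ground state energies --- are essentially forced.
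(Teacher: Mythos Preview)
Your argument is correct and follows essentially the same route as the paper: differentiate \eqref{TSPeq} to get $G'(x)=f(G(-x))$, discover the conserved quantity $F(G(x))+F(G(-x))=2$ (the paper's $W$ is your $F$), use it to eliminate $G(-x)$ and reduce to an autonomous first-order ODE with a fixed value at $0$, and read off uniqueness from the integral $x=\int_{G(0)}^{G(x)}ds/\Phi(s)$. If anything you are more careful than the paper about the qualitative behavior of $G$ at $\pm\infty$ (which pins down the constant) and about the existence half, which the paper dispatches in a single sentence.
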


\begin{proof}
We introduce the auxiliary function $T$ given by $T(g) = (1+g)e^{-g}$. It follows from \eqref{TSPeq} that \begin{equation} \label{diff} \frac{d}{dx} G(x) = T(G(-x)),\end{equation} and similarly $$\frac{d}{dx} G(-x) = -T(G(x)).$$ Hence \begin{equation} \label{even} G'(x)T(G(x)) = G'(x)G'(-x) = G'(-x)T(G(-x)).\end{equation}
Now let $W$ be the primitive to $T$ for which $W(0) = 0$, or explicitly, $$W(g) = 2-2e^{-g}-ge^{-g}.$$ Then by \eqref{even}, $$\frac{d}{dx} W(G(x)) + \frac{d}{dx} W(G(-x)) = 0.$$ Hence $W(G(x)) + W(G(-x))$ is a constant, which has to be 2 by the boundary conditions. After simplification, the equation is
\begin{equation} \label{explicit} \left(2 + G(x)\right)e^{-G(x)} + \left(2 + G(-x)\right)e^{-G(-x)} = 2.\end{equation}
At this point the similarity to \eqref{tspeq} becomes apparent. If we let $\Lambda$ be the function that maps $x>0$ to the positive solution $y$ to \eqref{tspeq}, then \eqref{explicit} says that $G(-x) = \Lambda(G(x))$. In particular, $G(0)\approx 1.146$ is the unique positive solution to the equation $$(2+G(0))e^{-G(0)} = 1.$$
Replacing $G(-x)$ by $\Lambda(G(x))$ in \eqref{diff}, we obtain $$G'(x) = T(\Lambda(G(x))),$$ or equivalently $$\frac{G'(x)}{T(\Lambda(G(x)))} = 1.$$

Although not as explicit as one would first hope, we have arrived at a differential equation relating $G'(x)$ to $G(x)$ without involving $G(-x)$.
Integrating, we obtain \begin{equation} \label{integral} x = \int_{G(0)}^{G(x)} \frac{dx}{T(\Lambda(x))}.\end{equation} Since the integrand is positive and $G(0)$ is known, $G(x)$ is uniquely determined by \eqref{integral}. Conversely, it is clear that the function $G$ defined by \eqref{integral} is a solution to \eqref{TSPeq}.
\end{proof}

Remarkably, the ground state limit $L^\star_{TSP}$ can be found in terms of $\Lambda$ directly from \eqref{explicit}, without using the uniqueness of the solution.

\begin{Prop}
The two characterizations of $L^\star_{TSP}$ are consistent. In other words, the right hand side of \eqref{TSPground} is equal to the right hand side of \eqref{TSPrig}. \end{Prop}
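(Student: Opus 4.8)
The plan is to convert the integral in \eqref{TSPground} into the one in \eqref{TSPrig} by the change of variables $g = G(x)$, using \eqref{explicit} to remove all reference to $G$. First I would record the relevant features of the (now unique) solution $G$: it is smooth and strictly increasing, with $G'(x) = T(G(-x)) > 0$ for $T(g) = (1+g)e^{-g}$ as above, and it maps $\mathbb{R}$ bijectively onto $(0,\infty)$ with $G(-\infty)=0$, $G(0)=g_0$ the positive root of $(2+g_0)e^{-g_0}=1$, and $G(+\infty)=+\infty$ --- all visible from \eqref{TSPeq} and the construction \eqref{integral}. I also need that $\Lambda$ is a smooth strictly decreasing involution of $(0,\infty)$ with fixed point $g_0$: this holds because $\phi(t):=(2+t)e^{-t}$ is a strictly decreasing bijection of $(0,\infty)$ onto $(0,2)$, so the symmetric relation $\phi(u)+\phi(v)=2$ determines each of $u,v$ from the other. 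In particular \eqref{explicit} reads $G(-x)=\Lambda(G(x))$ for \emph{every} $x\in\mathbb{R}$, not just $x>0$.

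With these in hand, substituting $g=G(x)$ in \eqref{TSPground}, so that $dx = dg/G'(x) = dg/T(G(-x)) = dg/T(\Lambda(g))$, turns
\[
  2L^\star_{TSP} = \int_{-\infty}^{\infty} G(x)\bigl(1+G(x)\bigr)e^{-G(x)}\,dx
\]
into $\int_0^\infty g\,T(g)/T(\Lambda(g))\,dg$. (Alternatively one can split the integral at $x=0$, reflect the negative half by $x\mapsto -x$, and glue the two halves using the involution property; this produces the pieces $\int_{g_0}^\infty$ and $\int_0^{g_0}$ of the same integrand.) Next, differentiating $(2+g)e^{-g}+(2+\Lambda(g))e^{-\Lambda(g)}=2$ gives $-T(g)-T(\Lambda(g))\,\Lambda'(g)=0$, i.e.\ $T(g)/T(\Lambda(g)) = -\Lambda'(g)$. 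So the integrand is exactly $-g\,\Lambda'(g)$, and integrating by parts --- or equivalently substituting $x=\Lambda(g)$ and using $\Lambda\circ\Lambda=\mathrm{id}$ --- yields $\int_0^\infty \Lambda(g)\,dg$. Since the function $y(x)$ appearing in \eqref{TSPrig} and \eqref{tspeq} is exactly $\Lambda(x)$ (multiply \eqref{tspeq} by $2$), this equals twice the right-hand side of \eqref{TSPrig}, which is the assertion.

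The one delicate point --- which I regard as the main obstacle, although it is routine --- is justifying the changes of variables and the vanishing of the boundary term $\bigl[g\,\Lambda(g)\bigr]_0^\infty$. This rests on the asymptotics of $\Lambda$: as $g\to 0^+$, $\phi(\Lambda(g))=2-\phi(g)\to 0$ forces $\Lambda(g)\to\infty$ with $\Lambda(g)\sim\log(1/g)$, so $g\,\Lambda(g)\to 0$; and as $g\to\infty$, $\Lambda(g)\to 0$ with $\Lambda(g)\sim (2+g)e^{-g}$, so again $g\,\Lambda(g)\to 0$. The same estimates show that the integral in \eqref{TSPground} converges and that every integral used in the argument is absolutely convergent, so nothing beyond this bookkeeping is required; the real content is the single identity $T(g)/T(\Lambda(g)) = -\Lambda'(g)$, which is just the derivative of \eqref{explicit}.
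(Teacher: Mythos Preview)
Your argument is correct and rests on the same two ingredients as the paper's proof: the relation $G(-x)=\Lambda(G(x))$ from \eqref{explicit} and the change of variables $t=G(x)$. The paper organizes the computation a bit more economically: it first recognizes $(1+G(x))e^{-G(x)}=T(G(x))=G'(-x)$ via \eqref{diff}, so the integrand in \eqref{TSPground} is $G(x)G'(-x)$; reflecting $x\mapsto -x$ gives $\int G'(x)G(-x)\,dx=\int G'(x)\Lambda(G(x))\,dx$, and the substitution $t=G(x)$ then yields $\int_0^\infty \Lambda(t)\,dt$ directly. This bypasses your identity $T(g)/T(\Lambda(g))=-\Lambda'(g)$, the integration by parts, and the boundary-term check for $g\,\Lambda(g)$, though of course your route reaches the same conclusion and the extra bookkeeping you did is sound.
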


\begin{proof}
In view of \eqref{diff}, equation \eqref{TSPground} can be written \begin{multline} \frac12\int_{-\infty}^\infty G(x)G'(-x)\,dx = \frac12\int_{-\infty}^\infty G'(x)G(-x)\,dx = \frac12\int_{-\infty}^\infty G'(x)\Lambda(G(x))\,du \\= \frac12\int_0^\infty \Lambda(t)\,dt,\end{multline} by the substitution $t = G(x)$.  This is the same thing as \eqref{TSPrig}.
\end{proof}

If instead we let $T(g) = e^{-g}$, we obtain in the same way the limit $L^\star_M$ for the matching problem. In that case the solution is explicit, with $W(g) = 1-e^{-g}$ and $\Lambda(t) = -\log(1-e^{-t})$.

\section{Rigorizing the replica results}
The proof that the results of \cite{W10tsp} are in agreement with the replica and cavity predictions is in itself satisfying as it shows that the inherently non-rigorous approach from statistical mechanics indeed gives a correct result.

Even more interesting is that the trick that transformed the integral equation \eqref{TSPeq} into an ordinary differential equation can produce an entirely rigorous proof of the TSP ground state limit independently of the results in \cite{W10tsp} (in view of the discussion of the TSP in \cite{A01} this is perhaps not that surprising). We first consider the technically simpler minimum matching problem, and later return to the TSP.

\subsection{Rescaling and diluted relaxation}
 It is convenient at this point to scale up the edge-lengths by a factor $n$ in order to obtain a local limit of the random model. We therefore let the edge-lengths be uniform on the interval $[0,n]$, which means that the total length of the minimum matching will be of order $n$.

We introduce another parameter $\lambda$ and study the \emph{diluted} relaxation of minimum matching. This relaxation consists in allowing any partial matching as a feasible solution, and letting the cost of a solution be the total length of the edges in the matching plus a penalty of $\lambda/2$ for each unmatched vertex.

Clearly edges of length greater than $\lambda$ cannot participate in the optimum solution, since it is less costly to leave the two endpoints unmatched and pay a penalty of $2\cdot \lambda/2 = \lambda$. Therefore the diluted relaxation is essentially a problem on an Erd\"os-R\'enyi random graph (sometimes called a Poisson Bethe lattice in the physics literature) where edges are present with probability $\lambda/n$.

It was shown in \cite{W09} (and in a different setting already in \cite{A92}) that in order to find the limit $L^\star_M$ of the minimum length of a perfect matching, it suffices to study the large $n$ limit of the diluted matching problem for fixed $\lambda$, and finally let $\lambda\to \infty$. Therefore we now leave the perfect matching problem and regard it only as a large $\lambda$ limit of the diluted problem.

\subsection{An exploration game}
A two-person perfect information zero-sum game called \emph{Exploration} was introduced in \cite{W09}. The two players Alice and Bob take turns choosing the edges of a self-avoiding walk starting from a preassigned vertex of a graph with lengths associated to the edges. At every move, the moving player pays an amount equal to the length of the chosen edge to the opponent. Before each move, the moving player also has the option of terminating the game and paying a penalty of $\lambda/2$ to the opponent. Each player is trying to maximize their total payoff (what they receive minus what they pay throughout the game).

As was shown in \cite{W09}, Exploration is connected to the diluted matching problem:
\begin{Prop} \label{P:firstMove} In a finite graph, Alice's optimal first move is to move along the edge incident to the starting point in the solution to the diluted matching problem if there is such an edge, and otherwise to pay $\lambda/2$ to Bob and terminate the game immediately. \end{Prop}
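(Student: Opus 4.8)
The plan is to reduce the statement to a single identity relating the value of the Exploration game to marginal costs in the diluted matching problem. Write $C(H)$ for the minimum cost of the diluted matching problem on a finite edge-weighted graph $H$ — the total length of the chosen edges plus $\lambda/2$ for each unmatched vertex — with $C(H)=0$ when $H$ has no vertices. For a vertex $v$ of $G$ let $X_v(G)$ denote the value of Exploration played on $G$ from $v$, measured as the net payoff of the player to move first. Since the walk is self-avoiding in a finite graph the game lasts at most $|V(G)|$ moves, and being a finite zero-sum game of perfect information it has a value computable by backward induction. Unwinding one move gives the recursion
\[
X_v(G)=\max\Big(-\tfrac{\lambda}{2},\ \max_{w\sim v}\big(-\ell_{vw}-X_w(G-v)\big)\Big),
\]
where the first term is the option of terminating immediately (paying $\lambda/2$), and in the second term, after moving along $vw$ the opponent faces Exploration on $G-v$ rooted at $w$, which contributes $-X_w(G-v)$ to the mover's payoff by the zero-sum property.

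The next step is to prove, by induction on $|V(G)|$, the identity
\[
X_v(G)=C(G-v)-C(G).
\]
The base case is a single vertex with no edges, where both sides equal $-\lambda/2$. For the induction step, substitute the hypothesis $X_w(G-v)=C(G-v-w)-C(G-v)$ into the recursion; the common term $C(G-v)$ factors out of the maximum, and what remains to check is
\[
C(G)=\min\Big(\tfrac{\lambda}{2}+C(G-v),\ \min_{w\sim v}\big(\ell_{vw}+C(G-v-w)\big)\Big).
\]
This is precisely the assertion that in an optimal solution on $G$ the root $v$ is either left unmatched — contributing the penalty $\lambda/2$ and leaving an optimal solution on $G-v$ — or matched to some neighbour $w$, contributing $\ell_{vw}$ and leaving an optimal solution on $G-v-w$; minimising over the two cases (and over $w$ in the second) recovers $C(G)$. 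Verifying this case split carefully, and keeping the signs of the payments straight throughout the induction, is the only genuine computation and is where I expect the bookkeeping to need the most attention; the existence of the game value and the validity of the recursion are standard and should not cause trouble.

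Finally I would read the proposition off from the identity. Fix an optimal diluted matching $M^\star$ on $G$. If $v$ is matched in $M^\star$, say to $w^\star$, then $C(G)=\ell_{vw^\star}+C(G-v-w^\star)$, so Alice's payoff from moving along $vw^\star$ is
\[
-\ell_{vw^\star}-X_{w^\star}(G-v)=-\ell_{vw^\star}-C(G-v-w^\star)+C(G-v)=C(G-v)-C(G)=X_v(G),
\]
so that move attains the value of the game and is therefore optimal. If instead $v$ is unmatched in $M^\star$, then $C(G)=\lambda/2+C(G-v)$, hence $X_v(G)=C(G-v)-C(G)=-\lambda/2$, which is exactly the payoff of terminating at once, so that move is optimal. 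When the edge lengths are distinct the optimal matching is unique and the two alternatives in the case split above are strictly separated, so in that case ``optimal'' can be upgraded to ``the unique optimal'' first move, matching the phrasing of the statement.
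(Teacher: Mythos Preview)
Your argument is correct. The identity $X_v(G)=C(G-v)-C(G)$, established by induction on $|V(G)|$ via the Bellman-type recursion $C(G)=\min\bigl(\tfrac{\lambda}{2}+C(G-v),\ \min_{w\sim v}(\ell_{vw}+C(G-v-w))\bigr)$, is exactly the right tool, and the proposition follows from it just as you describe.

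As for comparison with the paper: the paper does not actually prove this proposition. It is stated with the attribution ``As was shown in \cite{W09}'' and no argument is given. The identity you prove is in fact used later in the paper (Section~4.1, where the condition $z\le f(u)+f(v)$ for participation of an edge is invoked, again with a pointer to \cite{W09}), so your approach is entirely in line with what the cited reference presumably contains. In short, you have supplied the proof that the paper omits, and by the expected route.
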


Hence in order to find the asymptotic total cost of the minimum diluted matching, we can study the probability distribution of the cost of Alice's first move in Exploration starting from an arbitrary vertex.

\subsection{Tree approximation}
The Poisson Weighted Infinite Tree (PWIT) was introduced by David Aldous \cite{A92, A01}. The PWIT is an infinite rooted tree where each vertex has a countably infinite sequence of children. The edges to the children have lengths given by a rate 1 Poissson point process on the positive real numbers (independent processes for all vertices).

The relevance of the PWIT in this context comes from the fact that it is a \emph{local limit} of $K_n$ (under the rescaled edge lengths). In \cite{A92, A01} a concept of \emph{weak} limit was used, but the relaxation to finite $\lambda$ allows us to work with a stronger and simpler form of local limit.

If $k$ is a positive integer, we let the $(k, \lambda)$-neighborhood of a vertex $v$ in a graph be the subgraph that can be reached by walking at most $k$ steps from $v$ along edges of length at most $\lambda$. We now compare the $(k,\lambda)$-neighborhood of the root of the PWIT with the $(k,\lambda)$-neighborhood of an arbitrarily chosen vertex $v$ of the complete graph $K_n$.

For fixed $k$ and $\lambda$, an event $E$ in $K_n$ is \emph{$(k, \lambda)$-invariant} if it depends only on the isomorphism type of the $(k,\lambda)$-neighborhood of $v$. For such an event $E$ we can ask for the probability $P_{PWIT}(E)$ of the corresponding event on the PWIT, with the root corresponding to $v$. We want to compare it to the probability $P_n(E)$ of $E$ on $K_n$.

\begin{Prop} \label{P:coupling} For fixed $k$, $\lambda$ and $E$, $P_n(E) \to P_{PWIT}(E)$ as $n\to\infty$.
\end{Prop}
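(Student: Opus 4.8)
The plan is to realize both $(k,\lambda)$-neighborhoods through a breadth-first exploration and to couple the two explorations so that, with probability tending to $1$, they produce the \emph{same} edge-weighted rooted graph; since $E$ depends only on that structure, this forces $|P_n(E)-P_{PWIT}(E)|\to 0$. Concretely, from the chosen vertex $v$ of $K_n$ (edge lengths rescaled to be uniform on $[0,n]$) one maintains a queue of discovered-but-unexplored vertices, initialized with $v$; to explore a vertex $u$ one reveals the lengths of the edges from $u$ to all vertices not yet explored and retains those of length at most $\lambda$. A retained edge to a not-yet-discovered vertex enqueues that vertex at depth one greater (continuing until the vertices at depth $k$ have been enqueued), while a retained edge to an already-discovered vertex other than the parent of $u$ is called a \emph{collision}; in the absence of collisions the explored subgraph is a tree and coincides with the true $(k,\lambda)$-neighborhood of $v$. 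The same exploration of the root of the PWIT yields, because the restriction of a rate-$1$ Poisson process to $[0,\lambda]$ consists of a $\mathrm{Poisson}(\lambda)$ number of points that are i.i.d.\ uniform on $[0,\lambda]$, exactly a Galton--Watson tree with $\mathrm{Poisson}(\lambda)$ offspring truncated at depth $k$ and carrying i.i.d.\ $\mathrm{Uniform}[0,\lambda]$ edge lengths; this tree is a.s.\ finite with expected number of vertices $\sum_{j=0}^{k}\lambda^{j}=:C_{k,\lambda}$, for every value of $\lambda$.

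Next one couples the two explorations step by step. When a vertex with $s$ vertices discovered so far is explored, the number of retained edges leading to new vertices in $K_n$ is $\mathrm{Binomial}(n-s,\lambda/n)$, whereas on the PWIT it is $\mathrm{Poisson}(\lambda)$; by Le Cam's inequality together with the bound on the total variation distance between two Poisson laws, these differ in total variation by $O\big((s+\lambda^{2})/n\big)$, so they can be coupled to agree with at least that probability. Given the number of retained edges, their lengths are i.i.d.\ $\mathrm{Uniform}[0,\lambda]$ in \emph{both} models and can be coupled to coincide, and since the retained edges attach to fresh vertices in both, the two explored structures remain isomorphic as edge-weighted rooted graphs. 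Finally, conditionally on $s$ discovered vertices, the probability of a collision at the current step is at most $s\lambda/n$. Thus, if we stop the coupling the first time either a step fails, a collision occurs, or the number of discovered vertices exceeds a threshold $M$, then on the complementary event the two neighborhoods are literally identical, and the probability that the coupling fails for one of the first two reasons before size $M$ is reached is at most $M\cdot O\big((M+\lambda^{2})/n\big)=O(M^{2}/n)$ for fixed $\lambda$.

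It remains to choose $M$ so that size $M$ is essentially never reached, uniformly in $n$. For this one observes that the $K_n$ exploration is stochastically dominated by a Galton--Watson process with offspring $\mathrm{Binomial}(n,\lambda/n)$ truncated at depth $k$: each explored vertex has at most $\mathrm{Binomial}(n-s,\lambda/n)\preceq\mathrm{Binomial}(n,\lambda/n)$ new children, and collisions only shrink the tree further. Since $\mathrm{Binomial}(n,\lambda/n)$ has mean exactly $\lambda$, the expected number of vertices discovered in $K_n$ is at most $C_{k,\lambda}$ for every $n$, so $P_n(\text{size}>M)\le C_{k,\lambda}/M$, and in the same way $P_{PWIT}(\text{size}>M)\le C_{k,\lambda}/M$. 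Combining the three estimates, $|P_n(E)-P_{PWIT}(E)|\le 2C_{k,\lambda}/M+O(M^{2}/n)$; letting $n\to\infty$ and then $M\to\infty$ gives the claim. I expect the one genuinely delicate point to be exactly this uniform-in-$n$ control: the $K_n$ process is not literally a Galton--Watson process, since successive offspring counts draw from the same shrinking pool of vertices and the order of revealing edges must be chosen so that each edge is examined once and independently of the past, so the stochastic domination and the resulting size bound have to be set up to hold for all $n$ simultaneously. Once that is in place, the per-step Poisson and collision estimates are routine, and ties or edges of length exactly $\lambda$ may be ignored as they occur with probability zero.
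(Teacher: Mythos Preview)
The paper does not supply a proof of this proposition; it is stated as a fact, with the justification implicitly deferred to the references \cite{A92, A01, W09}. Your breadth-first exploration coupling is the standard argument and is correct: the per-step Binomial--Poisson approximation via Le Cam, the $O(s\lambda/n)$ collision bound, and the uniform-in-$n$ size control by stochastic domination with a $\mathrm{Binomial}(n,\lambda/n)$ Galton--Watson tree truncated at depth $k$ (yielding the same mean bound $\sum_{j=0}^k \lambda^j$ for every $n$) combine exactly as you say. The point you flag as delicate---that successive offspring counts in $K_n$ are not independent because they draw from a shrinking pool---is handled by your observation that each edge is revealed exactly once and that $\mathrm{Binomial}(n-s,\lambda/n)\preceq\mathrm{Binomial}(n,\lambda/n)$ regardless of what has been revealed, so the domination holds step by step conditionally on the past; this is enough for the Markov-inequality tail bound you need.
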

This means that the large $n$ asymptotic probability of an event which depends only on the neighborhood of a particular vertex in $K_n$ can be found by instead studying the corresponding event on the PWIT.

\subsection{Exploration on the PWIT}
In view of Propositions \ref{P:firstMove} and \ref{P:coupling}, it makes sense to study Exploration played on the PWIT. The difficulty is that Proposition~\ref{P:coupling} concerns only the first $k$ levels of the PWIT, while there is no bound on the number of moves in Exploration. As was shown in \cite{W09}, this difficulty can be handled by introducing a concept of \emph{valuation}.

By the \emph{$\lambda$-cluster}, we mean the component of the root of the PWIT in the subgraph containing only edges of length at most $\lambda$. For the reader familiar with Galton-Watson processes, we remark that the underlying graph of the $\lambda$-cluster is a Poisson($\lambda$) Galton-Watson process.

A function $f$ from the vertices of the $\lambda$-cluster to the interval $[-\lambda/2, \lambda/2]$ is called a \emph{valuation} if for every $v$ in the $\lambda$-cluster it satisfies \begin{equation} \label{valdef} f(v) = \min(\lambda/2, l_i - f(v_i)),\end{equation} where the minimum is taken over $\lambda/2$ and the children $v_i$ of $v$, and $l_i$ is the length of the edge from $v$ to $v_i$. A valuation can be thought of as a consistent way for a player to assign a value to having moved to a particular vertex. The value should represent the total future payoff to the player who just moved to the vertex. Indeed, if the $\lambda$-cluster is finite, there is only one valuation, and it is given by the total future payoff under optimal play. Since the $\lambda$-cluster can be infinite, there are potentially several different valuations.

There is a simple way of constructing a valuation. For integer $k\geq 0$ we can construct a \emph{partial valuation} by assigning arbitrary values to the vertices at distance $k$ from the root, and then propagating these values towards the root according to \eqref{valdef}. We define partial valuations $f_A^{(k)}$ and $f_B^{(k)}$ by assigning values at distance $k$ in favor of Alice and Bob respectively. More precisely, if $k$ is even, $f^{(k)}_A(v) = -\theta/2$ and $f^{(k)}_B(v)=\theta/2$, while if $k$ is odd, $f^{(k)}_A(v) = \theta/2$ and $f^{(k)}_B(v)=-\theta/2$. As $k\to\infty$, $f^{(k)}_B$ converges pointwise to a (complete) valuation $f_B$. Similarly $f^{(k)}_A$ converges to a valuation $f_A$.

As is shown in \cite{W09}, the justification of the replica symmetric predictions for the matching problem reduces to showing that \begin{equation} \label{root} E\left[f_B^{(k)}(root) - f_A^{(k)}(root)\right] \to 0\end{equation} as $k\to\infty$.
In \cite{W09}, this is proved for the more general pseudo-dimension $d\geq 1$ case. The method is slightly non-constructive and consists in showing that there is only one (complete) valuation $f$. Since $f_A^{(k)}$ and $f_B^{(k)}$ both have to converge pointwise to $f$ as $k\to\infty$, \eqref{root} then follows from the principle of monotone convergence.
Here we show that the case $d=1$ allows a more direct proof of \eqref{root}.

\begin{Thm} \label{T:XYbound}
$$E\left[f_B^{(k)}(root) - f_A^{(k)}(root)\right] \leq \frac{\lambda\cdot e^{\lambda}}{k+1}.$$
\end{Thm}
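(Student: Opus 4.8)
The plan is to reduce the bound to one summability estimate, using an \emph{averaging} trick. Put $\Delta_k:=f_B^{(k)}(\mathrm{root})-f_A^{(k)}(\mathrm{root})$ and $D_k:=E[\Delta_k]$. Two facts carry the argument. First, $\Delta_k\geq 0$: this is the non-negativity implicit in \eqref{root}, and it follows by sandwiching any complete valuation $f$ between the two partial valuations at the root — the bracketing of $f$ by $f_A^{(k)}$ and $f_B^{(k)}$ is immediate on level $k$ (valuations are $[-\lambda/2,\lambda/2]$-valued), each step of \eqref{valdef} up the tree reverses the two inequalities, and after $k$ steps the parity brings us back to $f_A^{(k)}(\mathrm{root})\leq f(\mathrm{root})\leq f_B^{(k)}(\mathrm{root})$. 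Second, $\Delta_k$ is pointwise non-increasing in $k$ (established below). Granting both, $(k+1)\Delta_k\leq\sum_{j=0}^{k}\Delta_j$ pointwise, hence
\[ D_k\;\leq\;\frac{1}{k+1}\sum_{j=0}^{k}D_j\;\leq\;\frac{1}{k+1}\sum_{j=0}^{\infty}D_j, \]
and the theorem will follow once $\sum_{j\geq 0}D_j\leq\lambda e^{\lambda}$ is proved.

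For the monotonicity: propagating \eqref{valdef} one step from level $m$ to level $m-1$ is anti-monotone in the boundary data, since $g\leq h$ on level $m$ gives $\min(\lambda/2,\,l_i-g(v_i))\geq\min(\lambda/2,\,l_i-h(v_i))$ on level $m-1$ vertex by vertex; so propagating up $m$ levels reverses the pointwise order when $m$ is odd and preserves it when $m$ is even, and in either case data bracketed between two constants $c_1\leq c_2$ is sent to a value between the images of $c_1$ and $c_2$. Now $f_A^{(k+1)}$ and $f_B^{(k+1)}$ satisfy \eqref{valdef} above level $k+1$, so their values on level $k$ lie in the admissible range $[-\lambda/2,\lambda/2]$ and are therefore bracketed, vertex by vertex, between the level-$k$ values $\pm\lambda/2$ of $f_A^{(k)}$ and $f_B^{(k)}$. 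Propagating up the remaining $k$ levels, $f_A^{(k+1)}(\mathrm{root})$ and $f_B^{(k+1)}(\mathrm{root})$ both fall in $[f_A^{(k)}(\mathrm{root}),f_B^{(k)}(\mathrm{root})]$, whence $0\leq\Delta_{k+1}\leq\Delta_k$.

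It remains to bound $\sum_j D_j$. By Tonelli this equals $E[\Phi(\mathrm{root})]$, where $\Phi(v):=\sum_{j\geq 0}\Delta_j(v)$ with $\Delta_j(v)$ the same discrepancy computed in the subtree of the $\lambda$-cluster rooted at $v$. The useful object is the interval $[m_j(v),M_j(v)]$ of values attainable at $v$ when the boundary $j$ levels below $v$ is left free: these intervals are nested and shrinking, $\Delta_j(v)=M_j(v)-m_j(v)$, and \eqref{valdef} gives the coupled recursion $M_j(v)=\min(\lambda/2,\min_i(l_i-m_{j-1}(v_i)))$, $m_j(v)=\min(\lambda/2,\min_i(l_i-M_{j-1}(v_i)))$ over the cluster-children $v_i$ of $v$. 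From this one reads off, for each $j\geq 1$, a distinguished child $v_\star=v_\star(v,j)$ — the minimizer of $l_i-M_{j-1}(v_i)$ — with $\Delta_j(v)\leq\Delta_{j-1}(v_\star)$, the \emph{deficit} $\Delta_{j-1}(v_\star)-\Delta_j(v)\geq 0$ measuring how much the stopping option $\lambda/2$ or a competing sibling cuts down the value at $v$. Iterating from the root produces a path along which $\Delta_k(\mathrm{root})=\lambda-S_k$ as long as the path survives to level $k$ (and $\Delta_k(\mathrm{root})=0$ after it dies), with $S_k\geq 0$ the accumulated deficit; since $\Delta_k(\mathrm{root})$ is non-increasing we may take $S_k$ non-decreasing, so $\Phi(\mathrm{root})=\sum_k\Delta_k(\mathrm{root})\leq\lambda\cdot\#\{k\geq 0:S_k<\lambda\}$.

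The main obstacle is the remaining estimate $E[\#\{k:S_k<\lambda\}]\leq e^{\lambda}$. Here the Poisson$(\lambda)$ offspring law of the $\lambda$-cluster is essential: the deficit accrued at each vertex of the distinguished path is controlled by the lengths of the sibling edges there, which are segments of rate-$1$ Poisson processes, so $\#\{k:S_k<\lambda\}$ is a renewal-type functional of the Galton--Watson tree whose expectation must be computed, the constant $e^{\lambda}$ emerging from a generating-function value such as $E\!\left[2^{\mathrm{Poisson}(\lambda)}\right]$. The delicate point is that the distinguished path — hence which sibling edges are relevant — depends on the truncation level $j$; organizing that dependence so that the deficits can be bounded uniformly is the heart of the matter, the rest being routine estimation.
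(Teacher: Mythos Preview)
Your reduction is sound: the pointwise monotonicity $0\le\Delta_{k+1}\le\Delta_k$ is correct and well argued, and it does reduce the theorem to the single estimate $\sum_{j\ge 0}D_j\le\lambda e^{\lambda}$. The problem is that you do not prove this estimate. Your pathwise scheme runs into exactly the obstacle you name: the ``distinguished child'' $v_\star(v,j)$ depends on $j$, so iterating does not produce a single path in the tree, and there is no well-defined accumulated deficit $S_k$ along which to write $\Delta_k(\mathrm{root})=\lambda-S_k$. What you actually get is, for each fixed $k$, a different path of length $k$, and the deficits along these different paths are not comparable across $k$. Your final paragraph is honest about this --- ``organizing that dependence \dots\ is the heart of the matter'' --- but that is an admission that the proof is missing its central step, not a proof. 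The numerological remark that $E[2^{\mathrm{Poisson}(\lambda)}]=e^{\lambda}$ is suggestive but is not connected to the random variable $\#\{k:S_k<\lambda\}$ you are trying to bound.

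The paper's argument avoids the pathwise picture entirely. It works with the survival functions $A_k(x)=P(f_A^{(k)}(\mathrm{root})\ge x)$ and $B_k(x)=P(f_B^{(k)}(\mathrm{root})\ge x)$, which satisfy explicit exponential recursions $A_{k+1}(x)=\exp\bigl(-\int_{-x}^{\lambda/2}B_k\bigr)$ and the analogous formula for $B_{k+1}$. Differentiating gives $A_{k+1}'(x)=-A_{k+1}(x)B_k(-x)$, and the key identity (``the trick'') is that
\[
\frac{d}{dx}\bigl(A_{k+1}(-x)+B_{k+1}(x)\bigr)=B_k(x)\bigl[A_{k+1}(-x)-A_k(-x)\bigr]+A_k(-x)\bigl[B_k(x)-B_{k+1}(x)\bigr],
\]
which is nonnegative and dominated by the telescoping terms $[A_{k+1}-A_k]+[B_k-B_{k+1}]$. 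Integrating over $[-\lambda/2,\lambda/2]$ and summing over $k$ yields $\sum_k\bigl(B_{k+1}(\lambda/2)-A_{k+1}(\lambda/2)\bigr)\le\lambda$, and monotonicity of the summands gives $B_{k+1}(\lambda/2)-A_{k+1}(\lambda/2)\le\lambda/(k+1)$. Since $A_{k+1}(\lambda/2)=\exp(-\lambda/2-E[f_B^{(k)}])$ and $B_{k+1}(\lambda/2)=\exp(-\lambda/2-E[f_A^{(k)}])$, with $A_{k+1}(\lambda/2)\ge e^{-\lambda}$, taking the ratio and a logarithm gives the stated bound. Incidentally, the same computation shows $D_k\le e^{\lambda}\bigl(B_{k+1}(\lambda/2)-A_{k+1}(\lambda/2)\bigr)$, so your target $\sum_j D_j\le\lambda e^{\lambda}$ is in fact true --- but it comes out of the distributional recursion, not from a pathwise argument.
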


\begin{proof}
We let $$A_k(x) = P(f_A^{(k)}(root)\geq x)$$ and $$B_k(x) = P(f_B^{(k)}(root)\geq x).$$ Clearly $A_k(x)$ and $B_k(x)$ are equal to 1 for $x<-\lambda/2$ and equal to 0 for $x>\lambda/2$. They are decreasing with a single discontinuity at $x=\lambda/2$ except for $A_0$, whose discontinuity is located at $x=-\lambda/2$. Pointwise we have $$A_0(x)\leq A_1(x) \leq A_2(x)\leq \dots \leq B_2(x) \leq B_1(x)\leq B_0(x).$$

Suppose in the following that $-\lambda/2 \leq x \leq \lambda/2$. Then $A_{k+1}(x)$ is the probability that there is no child $v_i$ of the root such that $l_i - f^{(k+1)}_A(v_i)<x$. In other words $A_{k+1}(x)$ is the probability that there is no event in the inhomogeneous Poisson process of $l_i$'s for which $f_A^{(k+1)}(v_i)> l_i - x$. Now notice that $f_A^{(k+1)}(v_i)$ has the same distribution as $f_B^{(k)}(root)$. Therefore $$A_{k+1}(x) = \exp\left(-\int_0^\infty B_k(l-x)\,dl\right) = \exp\left(-\int_{-x}^{\lambda/2}B_k(t)\,dt\right)$$ and similarly $$B_{k+1}(x) = \exp\left(-\int_{-x}^{\lambda/2} A_k(t)\,dt\right).$$
Differentiating, we see that $$A'_{k+1}(x) = -A_{k+1}(x)B_k(-x)$$ and $$B'_{k+1}(x) = -B_{k+1} (x)A_k(x).$$
We use the trick again (thereby promoting it to \emph{method}) and write this as
\begin{multline} \frac{d}{dx}\left(A_{k+1}(-x) + B_{k+1}(x)\right) = A_{k+1}(-x)B_k(x) - B_{k+1}(x)A_k(-x)\\= B_k(x)\cdot \left[A_{k+1}(-x)-A_k(-x)\right] + A_k(-x)\left[B_k(x)-B_{k+1}(x)\right],\end{multline} from which it follows that
 $$0\leq \frac{d}{dx}\left(A_{k+1}(-x) + B_{k+1}(x)\right) \leq \left[A_{k+1}(-x)-A_k(-x)\right] + \left[B_k(x)-B_{k+1}(x)\right].$$
By integrating over the interval $-\lambda/2\leq x\leq \lambda/2$, we find that \begin{multline} B_{k+1}(\lambda/2) - A_{k+1}(\lambda/2) = \int_{-\lambda/2}^{\lambda/2} \frac{d}{dx}\left(A_{k+1}(-x) + B_{k+1}(x)\right)\,dx \\ \leq  \int_{-\lambda/2}^{\lambda/2} \left(A_{k+1}(x) - A_k(x)\right)dx + \int_{-\lambda/2}^{\lambda/2} \left(B_k(x) - B_{k+1}(x)\right)dx.\end{multline}
Summing over $k$, we conclude that $$\sum_{k=0}^\infty \left(B_{k+1}(\lambda/2) - A_{k+1}(\lambda/2)\right) \leq \lambda.$$ Since $B_{k+1}(\lambda/2) - A_{k+1}(\lambda/2)$ is decreasing in $k$, it follows that \begin{equation} \label{G-F} B_{k+1}(\lambda/2) - A_{k+1}(\lambda/2) \leq \frac{\lambda}{k+1}.\end{equation}

Notice that $$A_{k+1}(\lambda/2) = \exp\left(-\lambda/2-E\left[f_B^{(k)}(root)\right]\right)$$ and $$B_{k+1}(\lambda/2) = \exp\left(-\lambda/2-E\left[f_A^{(k)}(root)\right]\right).$$
Since $A_{k+1}(\lambda/2) \geq \exp(-\lambda)$, we have \begin{multline} \exp\left(E\left[f_B^{(k)}(root)\right] - E\left[f_A^{(k)}(root)\right]\right) \\ = \frac{e^{-\lambda/2} \cdot e^{-E\left[f_A^{(k)}(root)\right]}}{e^{-\lambda/2}\cdot e^{-E\left[f_B^{(k)}(root)\right]}} = \frac{B_{k+1}(\lambda/2)}{A_{k+1}(\lambda/2)} \leq 1 + \frac{\lambda\cdot e^{\lambda}}{k+1}.\end{multline} Taking logarithms, we finally obtain $$E\left[f_B^{(k)}(root) - f_A^{(k)}(root)\right] \leq \log\left(1 + \frac{\lambda\cdot e^{\lambda}}{k+1}\right) \leq \frac{\lambda\cdot e^{\lambda}}{k+1}.$$
\end{proof}

\subsection{The limit as $k\to\infty$}
Since $E\left[f_B^{(k)}(root) - f_A^{(k)}(root)\right] \to 0$ as $k\to\infty$, $A_k$ and $B_k$ must converge to a common limit function that we denote by $F$. It turns out that $F$ can be determined explicitly.

\begin{Prop} \label{P:limitF}
On the interval $-\lambda/2 \leq x \leq \lambda/2$, the limit function $F$ is given by
\begin{equation} \label{explicitF} F(x) = \frac{1+q}{1+e^{(1+q)x}},\end{equation}
where $q = F(\lambda/2)$, and $q$ is determined by \begin{equation} \label{thetaq} \lambda = \frac{-2\log q}{1+q}.\end{equation}
\end{Prop}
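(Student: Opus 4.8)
The plan is to pass from the recursions for $A_{k+1}$ and $B_{k+1}$ to a single functional equation for the limit $F$, and then to recognize that equation as a truncated version of the matching equation \eqref{MMeq}, so that the symmetrization method of Section~\ref{S:cavity} applies. From the proof of Theorem~\ref{T:XYbound} we have, for $-\lambda/2 \le x \le \lambda/2$,
\begin{equation} \label{recs} A_{k+1}(x) = \exp\left(-\int_{-x}^{\lambda/2} B_k(t)\,dt\right), \qquad B_{k+1}(x) = \exp\left(-\int_{-x}^{\lambda/2} A_k(t)\,dt\right),\end{equation}
with $A_k$ increasing and $B_k$ decreasing to the common limit $F$. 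Since $0 \le A_k, B_k \le 1$, monotone convergence lets me take $k\to\infty$ under the integral sign, giving
\begin{equation} \label{Ffix} F(x) = \exp\left(-\int_{-x}^{\lambda/2} F(t)\,dt\right), \qquad -\lambda/2 \le x \le \lambda/2.\end{equation}
Because $0 \le F \le 1$, the right-hand side of \eqref{Ffix} is Lipschitz, so $F$ is continuous, hence $C^1$ (and then $C^\infty$) by bootstrapping; moreover $F(x) \ge e^{-\lambda} > 0$, so $1/F$ is well defined.

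Differentiating \eqref{Ffix} gives $F'(x) = -F(x)F(-x)$. I then use the trick once more: since also $F'(-x) = -F(-x)F(x)$, we get $\frac{d}{dx}\bigl(F(x) + F(-x)\bigr) = F'(x) - F'(-x) = 0$, so $F(x) + F(-x)$ equals a constant $c$, and setting $x = 0$ shows $c = 2F(0)$. (Equivalently, writing $F = e^{-G}$ turns \eqref{Ffix} into \eqref{MMeq} with the upper limit $\lambda/2$ in place of $\infty$, and with $W(g) = 1 - e^{-g}$ the quantity $W(G(x)) + W(G(-x)) = 2 - F(x) - F(-x)$ is constant.)

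Substituting $F(-x) = c - F(x)$ into the ODE gives the logistic equation $F'(x) = F(x)^2 - cF(x)$; the substitution $u = 1/F$ linearizes it to $u' = cu - 1$, whose solution with $F(0) = c/2$ is $u(x) = (1 + e^{cx})/c$, i.e.
\begin{equation} \label{Fform} F(x) = \frac{c}{1 + e^{cx}}.\end{equation}
To pin down $c$, note $F(t) + F(-t) = c$ gives $\int_{-\lambda/2}^{\lambda/2} F(t)\,dt = c\lambda/2$, so \eqref{Ffix} at $x = \lambda/2$ reads $q := F(\lambda/2) = e^{-c\lambda/2}$; comparing with \eqref{Fform} at $x=\lambda/2$, namely $q = c/(1+e^{c\lambda/2}) = cq/(1+q)$, forces $c = 1+q$. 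Then \eqref{Fform} becomes \eqref{explicitF}, and $q = e^{-(1+q)\lambda/2}$ rearranges, after taking logarithms, to \eqref{thetaq}. Finally the right-hand side of \eqref{thetaq} is strictly decreasing in $q$ from $+\infty$ (as $q\to 0^+$) to $0$ (at $q=1$), so it determines $q\in(0,1)$ uniquely, as claimed.

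The only delicate point is the passage to the limit from \eqref{recs} to \eqref{Ffix}, together with the claim that $F$ is differentiable with $F' = -F(\cdot)F(-\cdot)$ holding pointwise; I expect this to be routine given the uniform boundedness of the $A_k$ and $B_k$ and their monotonicity in $k$, so I anticipate no real obstacle. The substance of the argument is the now-familiar symmetrization, which collapses the two-variable relation to a one-variable logistic ODE that can be solved in closed form.
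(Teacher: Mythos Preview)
Your argument is correct and follows essentially the same route as the paper: pass to the limit in the recursion to get the fixed-point equation \eqref{Ffix}, differentiate to obtain $F'(x)=-F(x)F(-x)$, use the symmetry to see that $F(x)+F(-x)$ is constant, solve the resulting logistic ODE, and then match the boundary data. The only cosmetic differences are that the paper identifies the constant immediately as $1+q$ from $F(-\lambda/2)=1$ (the integral in \eqref{Ffix} is empty there) rather than via your integral-over-the-interval computation, and solves the ODE by separation of variables instead of the Bernoulli substitution $u=1/F$; your added remarks on passing to the limit, regularity of $F$, and uniqueness of $q$ are details the paper leaves implicit.
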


Hence the common limit distribution of $f_A^{(k)}$ and $f_B^{(k)}$ can be regarded as a rescaled and truncated logistic distribution together with a point mass of $q$ at the point $\lambda/2$. If we let $\lambda\to\infty$, and consequently $q\to 0$, then this distribution converges to the logistic distribution, which is what we expect in view of the results in \cite{A01}.

\begin{proof} [Proof of Proposition \ref{P:limitF}]
On the interval $-\lambda/2\leq x\leq \lambda/2$ the limit function $F$ must satisfy \begin{equation} \label{Feq} F(x) = \exp\left(-\int_{-x}^{\lambda/2}F(t)\,dt\right),\end{equation} and hence $$F'(x) = -F(x)F(-x).$$
This means that $F'(x) = F'(-x)$, which in turn implies that $F(x) + F(-x)$ is constant. Putting $q = F(\lambda/2)$, we get \begin{equation} \label{constEq} F(-x) = 1 + q - F(x),\end{equation} and consequently $$F'(x) = -F(x)(1+q-F(x)).$$ Writing $$-\frac{F'(x)}{F(x)(1+q-F(x))} = 1$$ and integrating with respect to $x$, we obtain $$\log\left(\frac{1+q-F(x)}{F(x)}\right) = (1+q)x + C,$$ where putting $x=0$ reveals that $C=0$. Hence $$\frac{1+q-F(x)}{F(x)} = e^{(1+q)x},$$ from which we obtain \eqref{explicitF}.

Finally we would like to express $q$ in terms of $\lambda$, and either of the equations $F(-\lambda/2)=1$ or $F(\lambda/2) = q$ gives $$q = e^{-(1+q)\lambda/2},$$ which in turn yields \eqref{thetaq}.
\end{proof}

\subsection{The longest edge in a minimum partial matching}
The number $q=F(\lambda/2)$ is the probability that the starting point of the Exploration game is not included in the optimum diluted matching, and therefore it is the asymptotical density of unmatched vertices. The problem of finding the minimum matching that contains a specified number of edges is called the \emph{partial matching problem}. Equation \eqref{thetaq} gives an explicit relation between the density of a minimum partial matching and the length of its longest edge.

Suppose that $q$ is fixed. We study the minimum length partial matching that includes all but at most $qn$ vertices. The idea is that except for small fluctuations, we obtain such a matching by choosing $\lambda$ according to \eqref{thetaq}.

\begin{Prop} \label{P:maxCost}
Let $0<q<1$ and let $X_n$ be the length of the longest edge in the minimum partial matching on $K_n$ that includes all but at most $qn$ vertices. Then
$$ X_n \overset{\rm p}\to \frac{-2\log q}{1+q}.$$
\end{Prop}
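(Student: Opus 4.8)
The plan is to reduce the statement to a combinatorial fact about minimum-cost matchings of a prescribed size, prove a cheap upper bound, and then prove the lower bound by passing to the local (PWIT) picture. Let $\mu(m)$ be the minimum total length of a partial matching of $K_n$ with exactly $m$ edges and set $m_0 = \lceil (1-q)n/2 \rceil$. Then $\mu$ is strictly increasing, and it is convex in $m$ by the usual alternating-path exchange argument: given optimal matchings of sizes $m-1$ and $m+1$, their symmetric difference splits into vertex-disjoint alternating paths and cycles from which one builds two matchings of size $m$ of total length at most $\mu(m-1)+\mu(m+1)$. Hence the minimum partial matching leaving at most $qn$ vertices unmatched is exactly the (a.s.\ unique) minimum-cost $m_0$-edge matching $M^\star$, and $X_n$ is its longest edge. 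Deleting the longest edge $e$ of $M^\star$ yields an $(m_0-1)$-edge matching of length $\mu(m_0)-|e|$, so $|e| \le c_{m_0} := \mu(m_0)-\mu(m_0-1)$; thus $X_n \le c_{m_0}$ holds deterministically. It remains to show (i) $c_{m_0} \to \lambda^\star := -2\log q/(1+q)$ in probability, and (ii) that for every $\varepsilon>0$, $M^\star$ has an edge of length $>\lambda^\star-\varepsilon$ with probability tending to $1$.

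For (i), fix $\varepsilon>0$ and put $\lambda_\pm = \lambda^\star \pm \varepsilon$. Equation \eqref{thetaq} exhibits $\lambda \mapsto q_\lambda$ as a decreasing bijection, so the asymptotic density $q_{\lambda_\pm}$ of unmatched vertices in the diluted relaxation at penalty $\lambda_\pm/2$ (read off from Propositions \ref{P:firstMove}, \ref{P:coupling} and \ref{P:limitF}) satisfies $q_{\lambda_+} < q < q_{\lambda_-}$. The number of matched vertices in the diluted optimum is a bounded-difference functional of the edge lengths, since changing one length alters the optimal matching (a.s.) only along a single alternating path through that edge and hence changes the number of matched vertices by at most two; so McDiarmid's inequality gives that the size $m_{\lambda_\pm}$ of the diluted-$\lambda_\pm$ optimum is $(1-q_{\lambda_\pm})n/2 + o(n)$ with high probability, and in particular $m_{\lambda_-} < m_0 < m_{\lambda_+}$ with high probability. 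Optimality of the size in the diluted-$\lambda$ problem gives $c_m \le \lambda$ for $m \le m_\lambda$ and $c_m \ge \lambda$ for $m > m_\lambda$, and convexity makes $c_m$ nondecreasing, so $\lambda_- \le c_{m_{\lambda_-}+1} \le c_{m_0} \le c_{m_{\lambda_+}} \le \lambda_+$ with high probability. Letting $\varepsilon \to 0$ gives $c_{m_0} \to \lambda^\star$ and hence $P(X_n > \lambda^\star + \varepsilon) \to 0$.

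For (ii), I would pass to the PWIT. The same convexity/optimality bookkeeping shows $M^\star$ coincides with the diluted-$\lambda_0$ optimum for every $\lambda_0 \in (c_{m_0}, c_{m_0+1})$, an interval contained in $(\lambda^\star-\varepsilon, \lambda^\star+\varepsilon)$ with high probability. On the PWIT at penalty $\lambda/2$, when the root is matched its partner is the child $v_i$ realizing the minimum in \eqref{valdef}, and the matching edge has length $l_i$. Since the valuation puts mass $q>0$ on the value $\lambda/2$ (Proposition \ref{P:limitF} and the convergence $B_k \to F$ made quantitative by Theorem \ref{T:XYbound}), the event that the root has exactly one child at distance $\le \lambda$, that this distance lies in $(\lambda-\varepsilon,\lambda)$, and that the child carries valuation exactly $\lambda/2$, has positive probability and forces the root's matching edge to exceed $\lambda-\varepsilon$. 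By the local coupling of Proposition \ref{P:coupling}, applied to the matching structure near a vertex (whose dependence on a bounded neighbourhood is controlled by the geometric rate in Theorem \ref{T:XYbound}), the corresponding probability in $K_n$ converges to this positive constant; since the relevant events at far-apart vertices are asymptotically independent, a second-moment estimate gives that $M^\star$ has $\Theta(n)$ edges longer than $\lambda^\star-\varepsilon$ with high probability, so $X_n > \lambda^\star-\varepsilon$ with high probability. Together with (i) this yields $X_n \overset{\rm p}\to \lambda^\star$.

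The main obstacle is the transfer in (ii): the penalty attached to $M^\star$ is the \emph{random} effective value $\lambda_0 \in (c_{m_0}, c_{m_0+1})$ rather than a fixed $\lambda$, so one must know that the local matching picture depends continuously on $\lambda$ and uniformly over a shrinking window around $\lambda^\star$ -- exactly the kind of quantitative locality Theorem \ref{T:XYbound} is meant to supply -- and that the second-moment estimate is uniform over that window. The other ingredients (convexity of $\mu$, monotonicity of $q_\lambda$, the McDiarmid concentration, and the variance bound from local dependence) are routine.
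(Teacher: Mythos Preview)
Your framework for the upper bound is correct and matches the paper's: the convexity of $m\mapsto\mu(m)$, the identification of $M^\star$ with the diluted-$\lambda_0$ optimum for $\lambda_0$ in the marginal-cost window $(c_{m_0},c_{m_0+1})$, and the bound $X_n\le c_{m_0}$ are exactly what underlies the paper's sentence ``as $\lambda$ goes from $0$ to infinity, the optimum diluted matching passes through all the minimum partial matchings''. Your part~(i), with the McDiarmid step, is in fact more carefully argued than the paper's one-line appeal to~\eqref{thetaq} (though the bounded-difference claim for the number of matched vertices under a single edge-length change deserves a line of justification).

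The gap is in part~(ii), and it is the one you flag yourself: the diluted problem attached to $M^\star$ has a \emph{random} penalty level $\lambda_0\in(c_{m_0},c_{m_0+1})$, and pushing the PWIT local picture and the second-moment count through uniformly over that random window is not supplied by Theorem~\ref{T:XYbound} as stated. The paper sidesteps this entirely with an elementary, deterministic forcing argument that needs no PWIT, no valuations, and no uniformity in $\lambda$: with high probability there is an edge $e$ of length in $(\lambda_q-\varepsilon,\lambda_q)$ such that neither endpoint of $e$ has any other incident edge of length $\le\lambda_q$. Such an edge is forced into the diluted-$\lambda_q$ optimum, since omitting it incurs penalty $\lambda_q>|e|$ at its two endpoints with no compensating saving elsewhere. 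To make this robust against the randomness of $\lambda_q$ you can simply look, for fixed $\varepsilon$, for edges of length in $(\lambda^\star-2\varepsilon,\lambda^\star-\varepsilon)$ whose endpoints are isolated at level $\lambda^\star+\varepsilon$; a first/second-moment computation in the Erd\H{o}s--R\'enyi graph of density $(\lambda^\star+\varepsilon)/n$ gives $\Theta(n)$ such edges with high probability, and each of them is forced into the diluted-$\lambda$ optimum for \emph{every} $\lambda\in[\lambda^\star-\varepsilon,\lambda^\star+\varepsilon]$, hence in particular into $M^\star$ once $\lambda_q$ lands in that interval. This replaces your part~(ii) by a two-line combinatorial observation and closes the proof.
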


\begin{proof}
For the moment think of $n$ as fixed. As the parameter $\lambda$ goes from 0 to infinity, the optimum diluted matching will pass through all the minimum partial matchings. Let $\lambda_q$ be the value for which the optimum diluted matching leaves out $qn$ vertices (rounded down). By \eqref{thetaq}, $$\lambda_q \overset{\rm p}\to \frac{-2\log q}{1+q}.$$

Clearly the optimum diluted matching contains no edge longer than $\lambda_q$. It remains to show that with high probability, the longest edge is not much shorter than $\lambda_q$. Take $\epsilon>0$. Then asymptotically almost surely, as $n\to\infty$, there will be some edge $e$ of length between $\lambda_q-\epsilon$ and $\lambda_q$ such that none of the vertices incident to $e$ has another edge shorter than $\lambda_q$. Such an edge must obviously be in the optimum $\lambda_q$-diluted matching.
\end{proof}

\section{The minimum diluted matching}
\subsection{Asymptotic total cost} \label{S:cost}
For fixed $\lambda$ we now want to find the asymptotical cost of the diluted matching problem as $n\to\infty$.
The cost (on average per vertex) of the penalties for vertices that are not included is concentrated at $q \lambda/2$. We therefore focus on the distribution of lengths of the edges in the optimum matching.
The final result (Proposition~\ref{P:finalCost} below) for the expected total length can be found with the methods of \cite{W10tsp}, but the distribution of the lengths of the participating edges is not available with that method.

\begin{Prop} \label{P:finalCost}
The expected total length of the edges in the optimum diluted matching is \begin{equation} \label{totalcost} \frac{n}{2}\cdot \int_q^1\frac{-2\log t}{1+t}\,dt + o(n),\end{equation} where as before $q$ is given by $$\lambda = \frac{-2\log q}{1+q}.$$
\end{Prop}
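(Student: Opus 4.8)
The plan is to sidestep any explicit computation of a length distribution on $K_n$ and instead recover the total edge length from the (already available) asymptotic density of unmatched vertices, via a ``sweep in the dilution parameter'' argument. Write $C_n(\lambda)$ for the minimum total cost of the diluted problem on $K_n$ with edge lengths uniform on $[0,n]$, and express the cost of a partial matching $M$ as $\ell(M)+\tfrac{\lambda}{2}u(M)$, where $\ell(M)$ is its total edge length and $u(M)$ the number of vertices it leaves unmatched. Then $C_n(\lambda)$ is the pointwise minimum over $M$ of functions that are affine in $\lambda$ with nonnegative slopes $\tfrac12 u(M)$, and $C_n(0)=0$; hence $C_n$ is concave, nondecreasing and piecewise linear, with derivative $\tfrac12 U_n(\lambda)$ at every $\lambda$ where the optimal matching is unique, where $U_n(\lambda)$ denotes the number of unmatched vertices in that matching. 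This yields the deterministic identity $C_n(\lambda)=\tfrac12\int_0^\lambda U_n(\nu)\,d\nu$, and the quantity we want is $L_n(\lambda):=C_n(\lambda)-\tfrac{\lambda}{2}U_n(\lambda)$, the total length of the participating edges.

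Taking expectations and using vertex-symmetry, $E[U_n(\nu)]=n\cdot P(\text{a fixed vertex is unmatched in the optimum }\nu\text{-diluted matching})$. By Proposition~\ref{P:firstMove} a vertex is unmatched precisely when Alice's optimal first move in Exploration is to terminate; transferring this event to the PWIT via Proposition~\ref{P:coupling} and evaluating it with the explicit limit law of Proposition~\ref{P:limitF}, the probability converges to $Q(\nu)$, the asymptotic density of unmatched vertices discussed after Proposition~\ref{P:limitF}, i.e.\ the number determined by $\nu=\tfrac{-2\log Q(\nu)}{1+Q(\nu)}$, $Q(0)=1$. Since $0\le U_n(\nu)/n\le 1$ and $E[U_n(\nu)]/n\to Q(\nu)$ pointwise in $\nu$, Tonelli together with dominated convergence give $\tfrac1n E[C_n(\lambda)]=\tfrac12\int_0^\lambda \tfrac1n E[U_n(\nu)]\,d\nu\to\tfrac12\int_0^\lambda Q(\nu)\,d\nu$, while the boundary term satisfies $\tfrac1n E[U_n(\lambda)]\to Q(\lambda)=q$. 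Hence $E[L_n(\lambda)]=\tfrac n2\bigl(\int_0^\lambda Q(\nu)\,d\nu-\lambda q\bigr)+o(n)$.

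It remains to identify $\int_0^\lambda Q(\nu)\,d\nu-\lambda q$ with $\int_q^1\tfrac{-2\log t}{1+t}\,dt$. From \eqref{thetaq}, the map $t\mapsto\tfrac{-2\log t}{1+t}$ is a smooth, strictly decreasing bijection of $(0,1)$ onto $(0,\infty)$, so $Q$ is its (smooth, strictly decreasing) inverse, with $Q(0^+)=1$ and $Q(\lambda)=q$. Integration by parts gives $\int_0^\lambda Q(\nu)\,d\nu=\lambda Q(\lambda)-\int_0^\lambda\nu\,Q'(\nu)\,d\nu=\lambda q-\int_0^\lambda\nu\,Q'(\nu)\,d\nu$, so $\int_0^\lambda Q(\nu)\,d\nu-\lambda q=-\int_0^\lambda\nu\,Q'(\nu)\,d\nu$; substituting $t=Q(\nu)$, equivalently $\nu=\tfrac{-2\log t}{1+t}$ (so that $\nu:0\to\lambda$ corresponds to $t:1\to q$, and $dt=Q'(\nu)\,d\nu$), this becomes $-\int_1^q\tfrac{-2\log t}{1+t}\,dt=\int_q^1\tfrac{-2\log t}{1+t}\,dt$, which is \eqref{totalcost}.

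The one step carrying real content is the convergence $E[U_n(\nu)]/n\to Q(\nu)$, i.e.\ that the asymptotic density of unmatched vertices is $Q(\nu)$. Being unmatched is not a bounded-depth neighborhood event, so Proposition~\ref{P:coupling} does not apply verbatim; one argues as in \cite{W09}, using Theorem~\ref{T:XYbound} to conclude that the extremal partial valuations $f_A^{(k)}$ and $f_B^{(k)}$ agree on the first-move verdict (whether to terminate, and if not along which edge) with probability tending to $1$ as $k\to\infty$, uniformly in $n$, so that on that good event the verdict --- hence whether the vertex is matched --- is determined by the $(k,\lambda)$-neighborhood, to which Proposition~\ref{P:coupling} does apply; the limiting probability of being unmatched in the $\nu$-diluted problem is then $Q(\nu)$ by Proposition~\ref{P:limitF}. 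Everything else --- the concavity identity, Tonelli and dominated convergence, and one integration by parts --- is routine. (Alternatively, running the Poisson first-passage computation of Alice's first move directly on the PWIT with the explicit $F$ of Proposition~\ref{P:limitF} yields the whole distribution of the lengths of the participating edges, and integrating that distribution reproduces \eqref{totalcost}.)
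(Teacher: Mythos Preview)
Your argument is correct and genuinely different from the paper's. The paper proceeds edge-by-edge: it writes the expected contribution of a fixed edge as $\tfrac1n\int_0^\lambda z\,P(\text{participation}\mid\text{length }z)\,dz$, identifies the participation event as $z\le f(u)+f(v)$ via the game, approximates $(f(u),f(v))$ by two independent PWIT root values with law $F$, and then manipulates the resulting double integral into the area under the curve $e^{-x}+e^{-y}=1+q$, finally differentiating in $q$ and integrating back to reach \eqref{totalcost}. You instead run a parametric (envelope-theorem) sweep in $\lambda$: from concavity of $C_n(\lambda)$ you extract $C_n(\lambda)=\tfrac12\int_0^\lambda U_n(\nu)\,d\nu$, subtract the penalty term, and pass to the limit using only the one-vertex quantity $E[U_n(\nu)]/n\to Q(\nu)$, which the paper has already identified as $F(\nu/2)$ right after Proposition~\ref{P:limitF}. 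A single integration by parts then lands on \eqref{totalcost}.

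What each approach buys: yours is shorter and more conceptual, and it explains transparently \emph{why} the integrand in \eqref{totalcost} coincides with the $\lambda$--$q$ relation \eqref{thetaq} --- indeed, the paper explicitly remarks after the proof that it ``suspects there is a shortcut from Proposition~\ref{P:maxCost} to Proposition~\ref{P:finalCost}'' based on exactly this observation, and your argument is that shortcut. The paper's route, on the other hand, yields strictly more information: as a byproduct one obtains the conditional participation probability $h(z)=P(z\le f_1+f_2)$, i.e.\ the full distribution of lengths among the participating edges, which is developed in the next subsection and which your sweep-in-$\lambda$ method does not recover. Your parenthetical final remark correctly points this out.
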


The upper limit of integration is thus given by the integrand being equal to $\lambda$. Since the cost of the penalties is $n \lambda q/2$, the total cost including penalties can be written as (dropping the error term) $$\frac{n}{2} \cdot\int_0^1 \min\left(\lambda, \frac{-2\log t}{1+t}\right)\,dt.$$

Although we started by fixing $\lambda$, it is apparently easier to express the final result in terms of the density $q$ of unmatched vertices.
It is of course no coincidence that the expression for $\lambda$ as a function of $q$ is the same as the integrand in \eqref{totalcost}, but we return to this point in a moment.

\begin{proof}
We derive \eqref{totalcost} with the method used in \cite{A01}, which in turn goes back to the physics literature. The edge lengths are uniform on $[0,n]$, and therefore the density function for the length of a particular edge is simply $1/n$ on that interval. The expected contribution to the total length of the optimum matching from an arbitrary edge $e$ between vertices $u$ and $v$ of $K_n$ is therefore \begin{equation} \label{contribution} \frac1n\cdot\int_0^{\lambda} z\cdot P(\text{participation given length $z$})\,dz.\end{equation}

The edge $e$ will participate in the optimum diluted matching if it is the optimal first move for Alice when the game starts at either of $u$ or $v$. We let $f(u)$ and $f(v)$ be the game-theoretical values of playing second if the game would start at $u$ or $v$ respectively, and be played with the edge $e$ deleted from the graph. If the game starts at $u$, Alice will go to $v$ in her first move if and only if the length $z$ of the edge $e$ satisfies $$z\leq f(u)+f(v)$$ (for a detailed argument see \cite{W09}).

The $(k, \lambda)$-neighborhoods of $u$ and $v$ (with $e$ deleted) can be approximated by two independent PWITs. Therefore the edge $e$ will participate in the optimum solution essentially if $z\leq f_1+f_2$, where $f_1$ and $f_2$ are independent and drawn from the distribution given by $F$ in Proposition~\ref{P:limitF}.

Hence apart from the scaling factor $1/n$, \eqref{contribution} is equal to $$\int_0^{\lambda} z\cdot P(z\leq f_1 + f_2)\,dz = \int_0^{\infty} z\cdot P(z\leq f_1 + f_2)\,dz.$$
Without using any particular properties of the probability distribution, we can rewrite this as \begin{multline} \label{rewrite} \int_0^\infty z \int_{-\infty}^\infty (-F'(x))\cdot P(f_2\geq z-x)\,dxdz \\= \int_0^\infty z \int_{-\infty}^\infty (-F'(x))F(z-x)\,dxdz.\end{multline}
With $u=z-x$, this becomes \begin{multline} \notag \int_{-\infty}^\infty F(u) \int_0^\infty z(-F'(z-u))\,dzdu \\= \int_{-\infty}^\infty F(u) \int_{-u}^\infty (x+u)(-F'(x))\,dxdu,\end{multline} and by partial integration, this is \begin{equation} \label{theIntegral} \int_{-\infty}^\infty F(u) \int_{-u}^\infty F(x)\,dxdu.\end{equation}

We can compute \eqref{theIntegral} using our explicit knowledge of the function $F$. But there is another method which is simpler and applicable to a wider range of problems. We introduce the function $$G(u) = \int_{-u}^\infty F(x)\,dx.$$ Clearly $G'(-u) = F(u)$, which means that \eqref{theIntegral} is transformed to
\begin{equation} \label{newIntegral} \int_{-\infty}^{\infty}G'(-u)G(u)\,du =
\int_{u=-\infty}^{u=\infty}G(u)\,dG(-u).\end{equation}
Now we begin to see some similarities to the calculations in Section~\ref{S:cavity}.
A simple interpretation of \eqref{newIntegral} is that it is the area under the curve (in the positive quadrant) when $G(u)$ and $G(-u)$ are plotted against each other. In order to find the value of this integral, we therefore only need to know the relation between $G(u)$ and $G(-u)$. By \eqref{Feq} and \eqref{constEq} we have $F(u) = e^{-G(u)}$ and $F(u) + F(-u) = 1+q$, which means that $$e^{-G(u)} + e^{-G(-u)} = 1+q.$$ Hence \eqref{newIntegral} is the area under the curve $$e^{-x} + e^{-y} = 1 + q,$$ in the $xy$-plane, which can also be expressed as \begin{equation} \label{qint}\int_0^{-\log q}{-\log\left(1+q-e^{-x}\right)}\,dx,\end{equation} where the upper limit of integration is obtained by putting $y=0$. This is precisely what can be obtained with the methods of \cite{W10tsp}.

A convenient way of handling \eqref{qint} is to differentiate with respect to $q$. The derivative is, after some simplification, $$\frac{2\log q}{1+q}.$$ By integrating back, \eqref{qint} is equal to $$\int_q^1 \frac{-2\log t}{1+t}\,dt,$$  which establishes \eqref{totalcost}.
\end{proof}

We already mentioned the observation that the integrand in \eqref{totalcost} is precisely the expression for $\lambda$ in terms of $q$. This gives a qualitative insight: The increase in total length of the minimum partial matching, if we require one more edge, is roughly equal to the length of the longest edge in the solution.
If the difference between the minimum partial matchings of $r$ and $r+1$ edges respectively is that the $(r+1)$-matching contains an edge between two vertices that do not participate in the $r$-matching, then of course that edge is the longest in the $(r+1)$-matching, and the difference in total length between the two matchings is exactly equal to the length of that edge. If the $(r+1)$-matching is obtained by replacing several edges in the $r$-matching, then the difference in total length of the two matchings can be larger than the length of the longest edge in the $(r+1)$-matching, but we can conclude here that, roughly speaking, for most $r$ it is not much larger.  We suspect that there is a shortcut from Proposition~\ref{P:maxCost} to Proposition~\ref{P:finalCost} based on this observation.

\subsection{The $h$-function}
Although it is not necessary for the computation of the expected cost of the optimum matching, it is interesting to find the conditional probability that an edge of given length participates in the solution. We therefore wish to determine the function $$h(x) = P(x\leq f_1 + f_2),$$ where $f_1$ and $f_2$ are independent and drawn from the limit distribution given by Proposition~\ref{P:limitF}, and $0 \leq x \leq \lambda$. We have \begin{multline} h(x) = q + \int_{-\lambda/2}^{\lambda/2} (-F'(u))F(x-u)\,du \\= q + (1+q)^3\int_{-\lambda/2}^{\lambda/2}\frac{1}{(1+e^{(1+q)u})(1+e^{-(1+q)u})}\cdot\frac{1}{1+e^{(1+q)(x-u)}}\,du. \end{multline} Here the term $q$ comes from the case that $f_1 = \lambda/2$, and the integral represents the case $f_1<\lambda/2$, when the density of $f_1$ at $u$ is $-F'(u)$.
With the substitution $t = e^{(1+q)u}$ we get $$du = \frac{dt}{(1+q)t}.$$ Moreover, since $e^{-(1+q)\lambda/2} = q$, the limits of integration $u=-\lambda/2$ and $u=-\lambda/2$ are equivalent to $t=q$ and $t=1/q$. Hence \begin{multline} \label{hfunction} h(x) = q + (1+q)^2 \int_{u = -\lambda/2}^{u = \lambda/2} \frac{t}{(1+t)^2(t+e^{(1+q)x})}\,dt \\= q + (1+q)^2 \int_{q}^{1/q} \frac{t}{(1+t)^2(t+e^{(1+q)x})}\,dt.\end{multline}
It can be verified that, writing $\alpha = e^{(1+q)x}$, the integrand has the primitive $$\frac{\alpha}{(\alpha-1)^2}\log\left(\frac{t+1}{t+\alpha}\right) + \frac{1}{(\alpha-1)(t+1)},$$ and after some simplification we get \begin{equation} \label{h} h(x) = q + \frac{(1+q)^3\alpha}{(\alpha-1)^2}\log\left(\frac{\alpha+q}{1+\alpha q}\right) - \frac{(1+q)^2(1-q)}{\alpha-1}. \end{equation}
If we put $q=0$, then $\alpha = e^x$, and we get $$h(x) = \frac{1-e^x+xe^x}{(e^x-1)^2},$$ which agrees with Theorem~2 of \cite{A01}.

Another special case is if we put $x=0$, in other words $\alpha=1$. This gives the probability $h(0)$ that an edge of zero length will participate in the minimum matching of density $1-q$. With $\alpha=1$, \eqref{h} does not make sense, but by evaluating \eqref{hfunction} we find that $$h(0) = \frac12 + q -\frac12q^2.$$

\section{The TSP}
\subsection{Relaxation and comply-constrain game}
As was described in \cite{W09}, the TSP is related to a ``refusal'' or ``comply-constrain'' version of Exploration: Whenever Alice is about to make a move, Bob has the right to forbid one of her move options, and vice versa. As before, a player can quit the game at cost $\lambda/2$.

The finite-$\lambda$ relaxation of the TSP is obtained by allowing any set of edges for which each vertex has degree at most 2, and where a penalty of $\lambda/2$ is paid for each missing edge at each vertex. Hence a vertex of degree 1 means a penalty of $\lambda/2$, while a vertex of degree 0 leads to a penalty of $\lambda$. In the case of the TSP the parity of the number $n$ of vertices is not an issue, and therefore equivalently the total penalty is $$\lambda\cdot (n-\#\text{ edges in the solution}).$$

The comply-constrain game leads to a different concept of valuation. Instead of \eqref{valdef}, we require
\begin{equation} \label{valdefTSP}
f(v) = \min(\lambda/2, {\rm min}_2(l_i-f(v_i))).
\end{equation}

Here $\min_2$ means second-smallest. We remark that equations equivalent to \eqref{valdefTSP} were derived in \cite{KM89, MP86a, MP86b} and also in \cite{A01}. We similarly redefine the partial valuations $f^{(k)}_A$ and $f^{(k)}_B$ in the obvious way. Again the crucial point is to prove that the expectation of $f_B^{(k)}(root) - f_A^{(k)}(root)$ tends to zero for large $k$. This time we obtain a slightly stronger bound:

\begin{Prop}
$$E\left[f_B^{(k)}(root) - f_B^{(k)}(root)\right] \leq \frac{e^\lambda}{k+1}.$$
\end{Prop}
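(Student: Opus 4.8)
The plan is to run the argument of Theorem~\ref{T:XYbound} almost verbatim, with the two changes dictated by \eqref{valdefTSP}: the $\min$ of the matching valuation is replaced by $\min_2$, and the exponential is replaced by the function $T(g)=(1+g)e^{-g}$ that already governed \eqref{explicit}. First I would set $A_k(x)=P(f_A^{(k)}(root)\ge x)$ and $B_k(x)=P(f_B^{(k)}(root)\ge x)$; as before these equal $1$ for $x<-\lambda/2$, vanish for $x>\lambda/2$, and (by the same induction, now using that $T$ is strictly decreasing on $[0,\infty)$) obey the chain $A_0\le A_1\le\cdots\le B_1\le B_0$ on $[-\lambda/2,\lambda/2]$. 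For $-\lambda/2\le x\le\lambda/2$, the event $f_A^{(k+1)}(root)\ge x$ holds iff \emph{at most one} child $v_i$ of the root satisfies $l_i-f_A^{(k+1)}(v_i)<x$; using the distributional identity $f_A^{(k+1)}(v_i)\sim f_B^{(k)}(root)$ from the proof of Theorem~\ref{T:XYbound}, the ``bad'' children form a Poisson process of intensity $l\mapsto B_k(l-x)$ with total mass $M=\int_{-x}^{\lambda/2}B_k(t)\,dt$, and $P(\mathrm{Poisson}(M)\le 1)=(1+M)e^{-M}$. Hence
$$A_{k+1}(x)=T\bigl(P_{k+1}(x)\bigr),\qquad B_{k+1}(x)=T\bigl(Q_{k+1}(x)\bigr),$$
where $P_{k+1}(x)=\int_{-x}^{\lambda/2}B_k$ and $Q_{k+1}(x)=\int_{-x}^{\lambda/2}A_k$, so that $P_{k+1}'(x)=B_k(-x)$ and $Q_{k+1}'(x)=A_k(-x)$.

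Next I would use the trick again, but --- since $T$ is not the exponential --- applied to $W$, the primitive of $T$ with $W(0)=0$, i.e.\ $W(g)=2-(2+g)e^{-g}$, which is increasing. Differentiating $W(P_{k+1}(x))+W(Q_{k+1}(-x))$ and using $W'=T$ together with the recursion gives $\frac{d}{dx}\bigl[W(P_{k+1}(x))+W(Q_{k+1}(-x))\bigr]=A_{k+1}(x)B_k(-x)-B_{k+1}(-x)A_k(x)$, which I would rewrite as $B_k(-x)\bigl[A_{k+1}(x)-A_k(x)\bigr]+A_k(x)\bigl[B_k(-x)-B_{k+1}(-x)\bigr]$. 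By the monotonicity chain both brackets are nonnegative, and since $A_k,B_k\le 1$ this quantity lies between $0$ and $\bigl[A_{k+1}(x)-A_k(x)\bigr]+\bigl[B_k(-x)-B_{k+1}(-x)\bigr]$. Integrating over $-\lambda/2\le x\le\lambda/2$, using $P_{k+1}(-\lambda/2)=Q_{k+1}(-\lambda/2)=0$, $W(0)=0$, $P_{k+1}(\lambda/2)=\beta_k$, $Q_{k+1}(\lambda/2)=\alpha_k$, where $\alpha_k=\int_{-\lambda/2}^{\lambda/2}A_k=\lambda/2+E[f_A^{(k)}(root)]$ and $\beta_k=\int_{-\lambda/2}^{\lambda/2}B_k=\lambda/2+E[f_B^{(k)}(root)]$, this becomes $W(\beta_k)-W(\alpha_k)\le(\alpha_{k+1}-\alpha_k)+(\beta_k-\beta_{k+1})$. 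Summing over $k\ge 0$, and using $\alpha_0=0$, $\beta_0=\lambda$ (from the choice of boundary values) together with $\alpha_{K+1}\le\beta_{K+1}$, the right side telescopes to give $\sum_{k\ge0}\bigl(W(\beta_k)-W(\alpha_k)\bigr)\le\beta_0-\alpha_0=\lambda$.

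To finish, note $0\le\alpha_k\le\beta_k\le\lambda$, so since $T$ is decreasing, $W(\beta_k)-W(\alpha_k)=\int_{\alpha_k}^{\beta_k}T(g)\,dg\ge(\beta_k-\alpha_k)\,T(\lambda)=(\beta_k-\alpha_k)(1+\lambda)e^{-\lambda}$. Thus $(1+\lambda)e^{-\lambda}\sum_{k\ge0}(\beta_k-\alpha_k)\le\lambda$; and because $\beta_k-\alpha_k=E\bigl[f_B^{(k)}(root)-f_A^{(k)}(root)\bigr]$ is decreasing in $k$, the $k$th partial sum is at least $(k+1)(\beta_k-\alpha_k)$, whence $E\bigl[f_B^{(k)}(root)-f_A^{(k)}(root)\bigr]\le\frac{\lambda}{1+\lambda}\cdot\frac{e^{\lambda}}{k+1}\le\frac{e^{\lambda}}{k+1}$. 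The sharpening relative to Theorem~\ref{T:XYbound} comes exactly from the factor $1+\lambda$ in $1/T(\lambda)$, which cancels the length $\lambda$ of the integration interval.

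The step I expect to be the main obstacle is the differentiation in the second paragraph. Because $T\ne\exp$, one has $A_{k+1}'(x)=-P_{k+1}(x)e^{-P_{k+1}(x)}B_k(-x)$, which does not factor through $A_{k+1}$ itself, so the matching computation cannot be repeated directly on $A_k,B_k$. The remedy is to differentiate $W$ evaluated at the \emph{integrals} $P_{k+1},Q_{k+1}$ rather than at the distribution functions --- exploiting $W'=T$ in precisely the way \eqref{explicit} was obtained --- and then to verify that the cross terms reorganize into a telescoping bound and that $\alpha_0,\beta_0$ supply the correct endpoints; getting this bookkeeping right is where the care is needed.
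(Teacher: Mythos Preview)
Your proposal is correct and follows essentially the same route as the paper. The paper's own proof introduces $a_k(x)=\int_{-x}^{\lambda/2}A_k$ and $b_k(x)=\int_{-x}^{\lambda/2}B_k$ (your $Q_{k+1},P_{k+1}$), differentiates $(2+a_k(x))e^{-a_k(x)}+(2+b_k(-x))e^{-b_k(-x)}$ (i.e.\ $-W$ in your notation), obtains the same cross-term decomposition, telescopes to $\sum_k\bigl[W(\beta_k)-W(\alpha_k)\bigr]\le\lambda$, and then extracts the bound $\frac{\lambda e^{\lambda}}{(1+\lambda)(k+1)}$ via the mean value theorem on $(2+x)e^{-x}$; the only cosmetic difference is that the paper first uses monotonicity in $k$ on $W(\beta_k)-W(\alpha_k)$ and then converts to $\beta_k-\alpha_k$, whereas you convert first and then use monotonicity of $\beta_k-\alpha_k$.
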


\begin{proof}
We again let $$A_k(x) = P(f_A^{(k)}(root)\geq x)$$ and $$B_k(x) = P(f_B^{(k)}(root)\geq x).$$

For $-\lambda/2 \leq x \leq \lambda/2$, $A_{k+1}(x)$ is now the probability that there is at most one child $v_i$ of the root such that $l_i - f^{(k+1)}_A(v_i)<x$, that is, $A_{k+1}(x)$ is the probability that there is at most one event in the inhomogeneous Poisson process of $l_i$'s for which $f_A^{(k+1)}(v_i)> l_i - x$. Since again $f_A^{(k+1)}(v_i)$ has the same distribution as $f_B^{(k)}(root)$, we get
\begin{multline} \label{recurrence1}
A_{k+1}(x) = \left(1+\int_0^\infty B_k(l-x)\,dl\right)\exp\left(-\int_0^\infty B_k(l-x)\,dl\right) \\= \left(1+\int_{-x}^{\lambda/2}B_k(t)\,dt\right)\exp\left(-\int_{-x}^{\lambda/2}B_k(t)\,dt\right)\end{multline} and similarly
\begin{equation} \label{recurrence2}
B_{k+1}(x) = \left(1+\int_{-x}^{\lambda/2} A_k(t)\,dt\right)\exp\left(-\int_{-x}^{\lambda/2} A_k(t)\,dt\right).
\end{equation}

It is convenient to introduce the functions $$a_k(x) = \int_{-x}^{\lambda/2} A_k(t)\,dt$$ and $$b_k(x) = \int_{-x}^{\lambda/2} B_k(t)\,dt.$$
Using the trick again, we consider the quantity $$\Delta_k(x) = \frac{d}{dx}\left[(2+a_k(x))e^{-a_k(x)} + (2+b_k(-x))e^{-b_k(-x)}\right].$$
It is easily verified that $$\Delta_k(x) = A_k(-x)(B_k(x) - B_{k+1}(x)) + B_k(x)(A_{k+1}(-x) - A_k(-x)).$$
Since the function $(1+x)e^{-x}$ is decreasing in $x$, it follows inductively from the recurrence equations \eqref{recurrence1} and \eqref{recurrence2} that pointwise, $$A_0(x) \leq A_1(x) \leq A_2(x) \leq \dots \leq B_2(x) \leq B_1(x) \leq B_0(x).$$ Therefore $$0\leq \Delta_k(x) \leq \left[B_k(x) - B_{k+1}(x)\right] + \left[A_{k+1}(x) - A_k(x)\right].$$ Summing over $k$, we conclude that $$\sum_{k=0}^\infty \int_{-\lambda/2}^{\lambda/2} \Delta_k(x)\,dx \leq \lambda.$$
By the boundary conditions $a_k(-\lambda/2) = b_k(-\lambda/2) = 0$, this implies that $$\sum_{k=0}^\infty \left[(2+a_k(\lambda/2))e^{-a_k(\lambda/2)} - (2+b_k(\lambda/2))e^{-b_k(\lambda/2)}\right] \leq \lambda.$$

Observe that $$E[f_B^{(k)}(root) - f_A^{(k)}(root)] = b_k(\lambda/2) - a_k(\lambda/2).$$ Since $(2+x)e^{-x}$ is monotone decreasing for $x\geq 0$, it follows that $$(2+a_k(\lambda/2))e^{-a_k(\lambda/2)} - (2+b_k(\lambda/2))e^{-b_k(\lambda/2)} \leq \frac{\lambda}{k+1}.$$ Since the absolute value of the derivative of $(2+x)e^{-x}$ is $(1+x)e^{-x}$, which is decreasing, and trivially $b_k(\lambda/2) \leq \lambda$, it follows that $$b_k(\lambda/2) - a_k(\lambda/2) \leq \frac{\lambda e^\lambda}{(\lambda+1)(k+1)} \leq \frac{e^\lambda}{k+1}.$$ This completes the proof.
\end{proof}

\subsection{The finite-$\lambda$ integral equation}
The functions $A_k$ and $B_k$ thus converge to a common limit that we again denote by $F$, and which satisfies $$F(x) = \left(1 + \int_{-x}^{\lambda/2} F(t)\,dt\right) \exp\left( -\int_{-x}^{\lambda/2} F(t)\,dt\right).$$
If we write $G$ for the common limit of $a_k$ and $b_k$, that is, $$G(x) = \int_{-x}^{\lambda/2} F(t)\,dt,$$ then we obtain  $$G'(x) = (1+G(-x))e^{-G(-x)}.$$

This equation looks like the Krauth-M\'ezard-Parisi equation, but the difference is that for finite $\lambda$, we only require it to hold in the interval $[-\lambda/2, \lambda/2]$. Moreover, the boundary conditions depend on $\lambda$.
By the trick, again, $$(2+G(x))e^{-G(x)} + (2+G(-x))e^{-G(-x)} = C,$$ for some constant $C$ in the interval $2 < C < 4$.

If $C$ is fixed, the solution is unique: Supposing that we know $C$, $G(0)$ is determined by $$(2+G(0))e^{-G(0)} = C/2.$$ Let $\Lambda$ be the function mapping $G(x)$ to $G(-x)$, and as in Section~\ref{S:cavity} let $T(g)=(1+g)e^{-g}$. Then $$G'(x) T(G(-x)) = T(\Lambda(G(x))),$$ and again
\begin{equation} x = \int_{G(0)}^{G(x)} \frac{dt}{T(\Lambda(t))}.\end{equation}

This shows that $G$ is uniquely determined by $C$ (and vice versa), and in view of the results of the previous section, $C$ is therefore determined by $\lambda$.

\subsection{The length of the minimum tour}
In analogy with minimum matching, we obtain the cost of the minimum diluted tour as $$\int_0^\lambda zP(z\leq f_1+f_2)\,dz.$$ Through calculations analogous to those of Section~\ref{S:cost}, this can be transformed to the area under the curve given by $$(2+x)e^{-x}+(2+y)e^{-y} = 2-q,$$ where $2-q$ is the average degree of a vertex in the solution. Again this agrees with what is obtained in \cite{W10tsp}. The asymptotical total length of the optimum tour is then recovered in the limit $\lambda\to\infty$, leading to the calculations of Section~\ref{S:cavity}. The fact that $\lambda\to \infty$ corresponds to the TSP is again a nontrivial result and depends on the theorem of Frieze \cite{F04}.

\end{document}